 \newtheorem{theorem}{Theorem}[section]
 \newtheorem{proposition}[theorem]{Proposition}
 \theoremstyle{definition}
 \theoremstyle{remark}
  \numberwithin{equation}{section}
\newcommand{\abs}[1]{\lvert#1\rvert}
\newcommand{\dupN}{\mathbb{N}}
\newcommand{\seq}[1]{(#1_{n})_{n\in\dupN}}
\newcommand{\nen}{n\in\mathbb{N}}
\newcommand{\alg}{\mathscr{A}}
\newcommand{\rpa}{\alg_R^{\dagger}}
\DeclarePairedDelimiterX\sip[2]{(}{)}{#1\,\delimsize\vert\,#2}
\DeclarePairedDelimiterX\siptilde[2]{(}{)_{\!_{\widetilde{A}}}}{#1\,\delimsize\vert\,#2}
\DeclarePairedDelimiterX\sipf[2]{(}{)_{f}}{#1\,\delimsize\vert\,#2}
\DeclarePairedDelimiterX\sipg[2]{(}{)_{g}}{#1\,\delimsize\vert\,#2}
\DeclarePairedDelimiterX\siptw[2]{(}{)_{\tform+\wform}}{#1\,\delimsize\vert\,#2}
\DeclarePairedDelimiterX\set[2]{\{}{\}}{#1\,\delimsize\vert\,#2}
\DeclarePairedDelimiterX\dual[2]{\langle}{\rangle}{#1,#2}
\newcommand{\D}{\mathfrak{X}}
\newcommand{\hil}{\mathscr{H}}
\newcommand{\Ee}{\mathfrak{E}}
\newcommand{\ffi}{\varphi}
\newcommand{\uf}{\mathfrak{u}}
\newcommand{\tf} {\mathfrak{t}}
\newcommand{\wf} {\mathfrak{w}}
\newcommand{\ssf} {\mathfrak{s}}
\newcommand{\Xx}{\mathfrak{X}}
\newcommand{\nullf}{\mathfrak{0}}
\newcommand{\fpx}{\mathcal{F}_+(\Xx)}
\renewcommand*{\phi}{\varphi}
\DeclareMathOperator{\tform}{\mathfrak{t}}
\DeclareMathOperator{\wform}{\mathfrak{w}}
\renewcommand*{\phi}{\varphi}
\newcommand{\Ees}{\mathfrak{E}^{\ast}}
\newcommand{\Kk}{\mathsf{K}}
\newcommand{\Ll}{\mathsf{L}}
\newcommand{\Jj}{\mathsf{J}}
\newcommand{\muw}{\mu_{\wf}}
\newcommand{\muwt}{\mu_{\wf}(\tf)}
\newcommand{\muwn}{\mu_{\wf}^{[n]}}
\newcommand{\muwnpe}{\mu_{\wf}^{[n+1]}}
\begin{document}
\title{Arlinskii's iteration and its applications}

\author[Tam\'as Titkos]{Tam\'as Titkos}

\address{Alfr\'ed R\'enyi Institute of Mathematics\\ Hungarian Academy of Sciences\\ Re\'altanoda u. 13-15.\\
Budapest H-1053\\ Hungary}

\email{titkos.tamas@renyi.mta.hu}
\thanks{The author was supported by the National Research, Development and Innovation Office, NKFIH-104206}
\subjclass{Primary 47A07, Secondary 46L51, 28A12, 43A35}

\keywords{Parallel sum; Lebesgue decomposition; Singularity. }

\begin{abstract} 
Several Lebesgue-type decomposition theorems in analysis have a strong relation to the operation called: \emph{parallel sum}. The aim of this paper is to investigate this relation from a new point of view. Namely, using a natural generalization of Arlinskii's approach (which identifies the singular part as a fixed point of a single-variable map) we prove the existence of a Lebesgue-type decomposition for nonnegative sesquilinear forms. As applications, we also show that how this approach can be used to derive analogous results for representable functionals, nonnegative finitely additive measures, and positive definite operator functions. The focus is on the fact that each theorem can be proved with the same completely elementary method.
\end{abstract}

\maketitle
\section{Introduction} The key notion of this paper is the so called \emph{parallel sum}, which was defined first for matrices by Anderson and Duffin \cite{series and parallel addition of matrices} in order to investigate electrical networks. This notion has several generalizations in several aspects, see for example \cite{Bot,Fill-Fishkind,Rickart,Passty}. An exceptionally fruitful is the one given by Ando \cite{Ando-LD}, Pekarev and {\v{S}}mul'jan \cite{PekarevSmuljan}, and many others in the context of bounded positive operators. Ando proved by means of parallel addition that a Lebesgue-type decomposition of bounded positive operators always exists.  Besides of its own interest, this type of decomposition plays a crucial role by investigating some extremal problems regarding the order structure of the positive cone (see for example \cite{EL,ggj}). We have to mention also Simon's fundamental paper \cite{Simon}, in which he suggested that there may be a noteworthy connection between the classical Lebesgue decomposition of measures, the canonical decomposition of densely defined quadratic forms, and the decomposition of states of $C^*$-algebras.

The aim of this paper, on the one hand, is to present an elementary method to obtain the Lebesgue-type decomposition in the context of nonnegative sesquilinear forms \cite{lebdec}. By mimicking the method of Arlinskii, we identify the singular part as a fixed point of a single-variable map. On the other hand, our second aim is to show that how this elementary method can be used to derive the corresponding decompositions of representable functionals (cf. \cite{G,kosaki,tarcsay-funkleb}), nonnegative finitely additive measures (cf. \cite{Darst,Rao}), and positive definite operator functions (cf. \cite{as,s1}), demonstrating that these results have a common root. The focus is on the fact that despite of the structural differences, all of the decomposition theorems mentioned above can be proved simultaneously with the same completely elementary method.

\section{Preliminaries}
To begin with, let us recall some basic notions. A nonnegative sesquilinear form on the complex linear space $\D$ is a mapping $\tf:\D\times\D\to\mathbb{C}$, which is linear in its first, anti-linear in its second argument, and the corresponding quadratic form is nonnegative, i.e. $\tf[x]:=\tf(x,x)\geq0$ for all $x\in\D$. Recall that such a quadratic form satisfies the parallelogram law and the polarization identity. Since every sesquilinear form in this paper is assumed to be nonnegative, we write shortly: \emph{form}. We write $\tf\leq\wf$, if $\tf[x]\leq\wf[x]$ for all $x\in\D$. The partially ordered set of forms will be denoted by $\fpx$. A sequence $(\tf_n)_{\nen}$ of forms is monotone decreasing if $m\leq n$ implies that $\tf_n\leq\tf_m$. If $(\tf_n)_{n\in\mathbb{N}}$ is a monotone decreasing sequence, then the pointwise limit ($\tf[x]:=\lim\limits_{n\in\mathbb{N}}\tf_n[x]$ for all $x\in\Xx$) exists and defines a form. We will denote this shortly by $\tf_n\downarrow\tf$. We remark that this limit is equal to the order infimum of the set $\{\tf_n\,|\,n\in\mathbb{N}\}$. Similarly, the pointwise limit of an upper bounded monotone increasing sequence ($m\leq n$ implies $\tf_m\leq\tf_n\leq\ssf$ for some form $\ssf$) is a form, and $\tf=\lim\limits_{n\in\mathbb{N}}\tf_n=\sup\limits_{n\in\mathbb{N}}\tf_n\leq\ssf$. We will use the notation $\tf_n\uparrow\tf$ in this case.\\
The form $\tf$ is \emph{$\wf$-dominated} if there exists an $\alpha\geq0$ such that $\tf\leq \alpha\wf$. The form $\tf$ is \emph{$\wf$-almost dominated}, if $\tf_n\uparrow\tf$ holds for some sequence $\seq{\tf}$ of $\wf$-dominated forms. We say that $\tf$ is \emph{$\wf$-closable} if for any sequence $(x_n)_{\nen}$ in $\Xx$ the following implication holds
\begin{align}\label{t is w-closable}
\big(\tf[x_n-x_m]\rightarrow 0\quad\mbox{and}\quad\wf[x_n]\rightarrow 0\big)\qquad\Longrightarrow\qquad\tf[x_n]\rightarrow 0.
\end{align}
We will use frequently the following important fact (for the proof see \cite[Theorem 3.8]{lebdec})
\begin{align}\label{clad}
\tf~\mbox{is}~\wf\mbox{-closable}\qquad\Longleftrightarrow\qquad\tf~\mbox{is}~\wf\mbox{-almost dominated}.
\end{align}
The form $\tf$ is $\wf$-\emph{singular} if for each form $\ssf$ on $\D$ the inequalities $\ssf\leq\tf$ and $\ssf\leq\wf$ imply that $\ssf=\mathfrak{0}$ (where $\nullf$ denotes the zero form). A decomposition of $\tf$ into $\wf$-almost dominated and $\wf$-singular parts is called $\wf$-Lebesgue decomposition. To see that this type of decomposition is not unique in general we refer the reader to \cite[Theorem 4.4]{lebdec}. An analogous decomposition of not necessarily nonnegative sesquilinear forms can be found in the recent paper of Di Bella and Trapani \cite[Section 4]{Trapani}.\\

We remark that the idea of decomposing (densely defined) sesquilinear forms into regular and singular parts goes back to Simon \cite{Simon}, and there are recent results in the same spirit in the theory of partial differential equations (see for example \cite{ter Elst 1,ter Elst 2,Vogt}).\\

As was mentioned above, the central notion of our approach is the parallel sum of forms which was introduced by Hassi, Sebesty\'en, and de Snoo in their fundamental paper \cite{lebdec}. The definition and the properties of parallel addition are given in the following proposition  (for the details see \cite[Proposition 2.2 and Lemma 2.3]{lebdec}).

\begin{proposition}\label{parsumproperty}
For $\tf,\wf\in\fpx$ the quadratic form of the parallel sum $\tf:\wf$ is
    \begin{align*}
        (\tf:\wf)[x]:=\inf\limits_{y\in\D}\big\{\wf[y+x]+\tf[y]\big\},\qquad(x\in\D).
    \end{align*}
Furthermore, parallel addition satisfies the following properties
\begin{itemize}
\begin{multicols}{2}
\item[(a)] $\tf:\wf=\wf:\tf\leq\tf$,
\item[(b)] $(\tf:\wf):\ssf=\tf:(\wf:\ssf)$,
\item[(c)] $\tf\leq\ssf~\Rightarrow~\tf:\wf\leq\ssf:\wf$,
\item[(d)] $\lambda\tf:\mu\tf=\textstyle{\frac{\mu\lambda}{\mu+\lambda}}\tf$\quad($\lambda,\mu>0$),
\item[(e)] $(\tf:\wf)+(\ssf:\uf)\leq(\tf+\ssf):(\wf+\uf)$,
\item[(f)] $\tf_n\downarrow\tf,~\wf_n\downarrow \wf~\Rightarrow~\tf_n:\wf_n\downarrow\tf:\wf$.
\end{multicols}
\end{itemize}
\end{proposition}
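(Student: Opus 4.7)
My plan is to first establish that the formula $(\tf:\wf)[x]:=\inf_{y\in\D}\{\wf[y+x]+\tf[y]\}$ defines a nonnegative quadratic form on $\D$, and only then verify the listed properties. Nonnegativity is immediate, while $(\tf:\wf)[\lambda x]=|\lambda|^2(\tf:\wf)[x]$ follows by the substitution $y\mapsto\lambda y$. The key technical step is the parallelogram law. For $\leq$, I would fix $\varepsilon>0$ and pick near-optimal $y_1,y_2\in\D$ for $x$ and $y$; using $y_1+y_2$ and $y_1-y_2$ as trial vectors for $(\tf:\wf)[x+y]$ and $(\tf:\wf)[x-y]$, and invoking the parallelogram laws of $\tf$ and $\wf$ on the resulting sum, one obtains $(\tf:\wf)[x+y]+(\tf:\wf)[x-y]\leq 2(\tf:\wf)[x]+2(\tf:\wf)[y]+4\varepsilon$. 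For $\geq$, I would pick near-optimal $z_1,z_2$ for $x+y,x-y$ and test $(\tf:\wf)[x]$ and $(\tf:\wf)[y]$ with the halves $\tfrac{1}{2}(z_1\pm z_2)$; the identity $q[\tfrac{1}{2}(u+v)]+q[\tfrac{1}{2}(u-v)]=\tfrac{1}{2}(q[u]+q[v])$, valid for any quadratic form $q$, then closes the bookkeeping. Once the parallelogram law holds, polarization produces the underlying sesquilinear form.

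For the algebraic and order-theoretic properties: commutativity in (a) comes from the change of variables $y=-x-z$, and $\tf:\wf\leq\tf$ from the trial vector $y=-x$. Monotonicity (c) is immediate from the definition. For (d), I would complete the square to rewrite the integrand as $(\mu+\lambda)\tf\bigl[y+\tfrac{\mu}{\mu+\lambda}x\bigr]+\tfrac{\mu\lambda}{\mu+\lambda}\tf[x]$, which simultaneously exposes the value of the infimum and a minimizer. Superadditivity (e) follows from the elementary bound $\inf(F+G)\geq\inf F+\inf G$ applied after splitting the integrand. Associativity (b) is the subtlest of these: unfolding gives $\bigl((\tf:\wf):\ssf\bigr)[x]=\inf_{y,z}\{\ssf[z+x]+\wf[y+z]+\tf[y]\}$ and $\bigl(\tf:(\wf:\ssf)\bigr)[x]=\inf_{y',z'}\{\ssf[z'+y'+x]+\wf[z']+\tf[y']\}$; first replacing $y$ by $-y$ in the first expression (using $\tf[-y]=\tf[y]$) and then setting $y'=y$, $z'=z-y$ identifies the two double infima term by term.

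The last property (f) is where I anticipate the remaining subtlety. Monotonicity in each slot, via (a) and (c), shows that $(\tf_n:\wf_n)[x]$ is a decreasing sequence bounded below by $(\tf:\wf)[x]$, so the pointwise limit exists and dominates the latter. For the reverse estimate, fix $\varepsilon>0$ and $x\in\D$, and choose $y$ with $\wf[y+x]+\tf[y]<(\tf:\wf)[x]+\varepsilon$; since this $y$ is a legitimate trial vector at every level $n$, and since $\wf_n[y+x]\downarrow\wf[y+x]$ and $\tf_n[y]\downarrow\tf[y]$, one reads off $\limsup_n(\tf_n:\wf_n)[x]\leq\wf[y+x]+\tf[y]<(\tf:\wf)[x]+\varepsilon$, and $\varepsilon\to 0$ finishes the argument. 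Looking back, the real obstacle is the parallelogram law in step one: one needs trial vectors for the sum and difference that are simultaneously compatible with the parallelogram laws of $\tf$ and $\wf$, and the symmetric affine combinations sketched above are exactly what make both inequalities close up cleanly.
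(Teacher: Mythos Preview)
Your argument is correct in every part: the parallelogram-law verification via the symmetric trial vectors $y_1\pm y_2$ and $\tfrac12(z_1\pm z_2)$ is exactly the right mechanism, the change of variables for (a) and (b) is sound (in (b) your substitution amounts to the bijection $(y,z)\mapsto(y,\,y+z)$ followed by $y\mapsto -y$, which identifies the two double infima), the completion of the square in (d) is clean, and the $\varepsilon$-argument for (f) is the standard and correct way to pass the infimum through the monotone limit.

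As for comparison: the paper does not prove this proposition at all. It quotes the statement and refers the reader to \cite[Proposition~2.2 and Lemma~2.3]{lebdec} for the details, so there is no in-paper proof to compare against. What you have written is a self-contained proof that matches the approach in the cited source; in particular, your treatment of (f) is essentially the content of \cite[Lemma~2.3]{lebdec}. So your proposal is not merely a different route --- it supplies the argument that the paper deliberately omits.
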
 
We close this section with the following simple characterization of $\wf$-singularity
\begin{align}\label{t:w=0}
\tf~\mbox{is}~\wf\mbox{-singular}\qquad\Longleftrightarrow\qquad\tf:\wf~\mbox{is the zero form}.
\end{align}
Indeed, for any $\uf\in\fpx$ for which $\uf\leq\tf$ and $\uf\leq\wf$ hold, we have
$\nullf=\tf:\wf\geq\uf:\uf=\frac{1}{2}\uf\geq\nullf$
according to Proposition \ref{parsumproperty} (a) and (d). The converse implication is obvious.

\section{
Arlinskii's iteration}
In order to investigate von Neumann's well known result \cite[Satz 18.]{Neumann}, Arlinskii presented an iteration scheme for bounded positive operators in \cite[Theorem 5.4]{Arlinskii}. In this section we will apply this iteration in the context of forms. 
Let $\wf$ be a form on $\Xx$ and define the map 
\begin{equation}\label{singfix}
\muw:\fpx\to\fpx;\qquad\qquad\muwt:=\tf-\tf:\wf.
\end{equation}
Observe immediately that $\tf$ is a fixed point of $\muw$ if and only if $\tf$ is $\wf$-singular. Indeed, 
\begin{equation*}
\muwt=\tf\quad\Longleftrightarrow\quad\tf:\wf=0\quad\Longleftrightarrow\quad\tf\perp\wf.
\end{equation*} To find a fixed point of a map, it is convenient to use the following scheme:
let $\mu_{\wf}^{[0]}(\tf)=\tf$ and for $n\geq1$ denote by $\muwn$ the $n$th iteration of $\muw$, that is,  \begin{equation}\label{sorozat}
\muwn(\tf)=\muw\big(\mu_{\wf}^{[n-1]}(\tf)\big)=\mu_{\wf}^{[n-1]}(\tf)-\mu_{\wf}^{[n-1]}(\tf):\wf.
\end{equation}
Now we offer a completely elementary proof of the existence of a Lebesgue-type decomposition.
\begin{theorem}\label{THM}
Let $\tf$ and $\wf$ be forms on $\Xx$. Then a $\wf$-Lebesgue decomposition of $\tf$ exists, i.e. $\tf$ splits into $\wf$-almost dominated and $\wf$-singular parts.
\end{theorem}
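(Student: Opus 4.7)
The plan is to iterate the map $\muw$ from \eqref{singfix} starting at $\tf$, show that the pointwise limit of the orbit is a fixed point (hence $\wf$-singular), and verify that the remaining telescope sum is $\wf$-almost dominated. Set $\tf_n:=\muwn(\tf)$, with $\tf_0=\tf$. Proposition \ref{parsumproperty}(a) gives $\tf_n:\wf\leq\tf_n$, so by \eqref{sorozat}
\begin{equation*}
\tf_{n+1}=\tf_n-\tf_n:\wf\leq\tf_n,
\end{equation*}
i.e.\ the orbit is monotone decreasing. Consequently its pointwise limit $\ssf:=\lim_n\tf_n$ exists in $\fpx$ and $\tf_n\downarrow\ssf$.

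To identify $\ssf$ as $\wf$-singular I would evaluate $\lim_n(\tf_n:\wf)$ in two different ways. First, the iteration rule rewrites as $\tf_n:\wf=\tf_n-\tf_{n+1}$, and the right-hand side tends pointwise to $\nullf$ since both $\tf_n$ and $\tf_{n+1}$ converge to $\ssf$. Second, applying Proposition \ref{parsumproperty}(f) with the constant sequence $\wf_n\equiv\wf$ yields $\tf_n:\wf\downarrow\ssf:\wf$. Comparing the two conclusions gives $\ssf:\wf=\nullf$, which by the characterization \eqref{t:w=0} is exactly the $\wf$-singularity of $\ssf$. For the remaining summand I would telescope:
\begin{equation*}
\tf-\tf_n=\sum_{k=0}^{n-1}\big(\tf_k-\tf_{k+1}\big)=\sum_{k=0}^{n-1}\tf_k:\wf.
\end{equation*}
Each term is bounded above by $\wf$ by Proposition \ref{parsumproperty}(a), so the partial sum $\tf-\tf_n$ is $\wf$-dominated (in fact by $n\wf$). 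Since $\tf_n\downarrow\ssf$, the sequence $(\tf-\tf_n)_{\nen}$ increases pointwise to $\tf-\ssf$, so by definition $\tf-\ssf$ is $\wf$-almost dominated, and $\tf=(\tf-\ssf)+\ssf$ is the desired decomposition.

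The crux of the argument, and essentially its only nontrivial step, is the exchange of limit and parallel sum identifying $\ssf:\wf$ with $\lim_n(\tf_n:\wf)$; this rests entirely on the order-continuity of parallel addition recorded in property (f). Everything else reduces to direct algebraic manipulation of the iteration, which is precisely why Arlinskii's scheme is so well suited to producing a Lebesgue-type decomposition in this elementary fashion.
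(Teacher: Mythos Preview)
Your proof is correct and follows essentially the same route as the paper's own argument: iterate $\muw$, use Proposition \ref{parsumproperty}(f) to identify the limit of $\tf_n:\wf$ with $\ssf:\wf$, compare with the telescoping limit $\nullf$ to obtain $\wf$-singularity via \eqref{t:w=0}, and then bound the partial telescope $\tf-\tf_n\leq n\wf$ to conclude that $\tf-\ssf$ is $\wf$-almost dominated. The only difference is notational (the paper writes $\tau_{\wf}(\tf)$ for your $\ssf$).
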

\begin{proof}
Since $\mu_{\wf}(\ssf)\leq\ssf$ for all $\ssf\in\fpx$, the sequence defined in \eqref{sorozat} is monotone decreasing. Consequently, there exists a form (denoted by $\tau_{\wf}(\tf)$) such that $\muwn(\tf)\downarrow\tau_{\wf}(\tf)$. According to Proposition \ref{parsumproperty} (f), we have on the one hand that $\muwn(\tf):\wf\downarrow\tau_{\wf}(\tf):\wf$. On the other hand, 
\begin{align*}
\lim\limits_{n\to\infty}\left(\muwn(\tf):\wf\right)=\lim\limits_{n\to\infty}\left(\muwn(\tf)-\muwnpe(\tf)\right)=\nullf.
\end{align*}
Putting these two facts together, we get that $\tau_{\wf}(\tf):\wf=\nullf$, or equivalently, $\tau_{\wf}(\tf)$ is $\wf$-singular.

What is left is to show that $\tf-\tau_{\wf}(\tf)$ is $\wf$-almost dominated.  To see this, first observe that $\tf-\muwn(\tf)\uparrow\tf-\tau_{\wf}(\tf)$. Furthermore, $\tf-\muwn(\tf)$ can be written as a telescopic sum which satisfies
\begin{equation}\label{teleszkop}
\tf-\muwn(\tf)=\sum\limits_{k=0}^{n-1}\mu_{\wf}^{[k]}(\tf):\wf\leq n\wf
\end{equation}
according to Proposition \ref{parsumproperty} (a), and hence the proof is complete.
\end{proof}

%

\section{Applications}
In this section we are going to show some applications of Theorem \ref{THM}. Of course, if $\mathscr{H}$ is a Hilbert space with inner product $(\cdot\,|\,\cdot)$, and we consider the nonnegative sesquilinear form $$\tf_A(x,y):=(Ax\,|\,y)\qquad(x,y\in\mathscr{H})$$
corresponding to a bounded positive operator $A$, then we recover the original setting of Arlinskii. Since parallel addition can be defined in the same spirit for representable functionals, for nonnegative finitely additive measures, and for positive definite operator functions, we can obtain analogous decompositions in each case. We will also make some remarks on the connection between these decompositions and the corresponding classical results.

\subsection{Representable functionals}
Our constant reference in this subsection is the fundamental book of Palmer \cite[Chapter 9]{palmer}. For the sake of completeness, we present here the basic notions and notations.
Let $\alg$ be a unital $^*$-algebra. A linear functional $w:\alg\to\mathbb{C}$ is \emph{positive} if $w(a^{*}a)\geq0$ for all $a\in\alg$. For the positive functionals $w$ and $v$ we write $w\leq v$ if $v-w$ is positive. A linear functional $w$ on $\alg$ is called \emph{representable} if there is a $^*$-representation $\pi_w$ of $\alg$ in a Hilbert space $\hil_w$ and a vector $\zeta_w\in\hil_w$ such that
\begin{align}\label{E: representing}
w(a)=\sip{\pi_w(a)\zeta_w}{\zeta_w},\qquad (a\in\alg).
\end{align}
A positive functional is \emph{Hilbert bounded} if there exists a constant $C\geq0$ such that \begin{align}\label{hb}
\abs{w(a)}^2\leq Cw(a^*a),\qquad(a\in\alg).
\end{align}
The least possible $C$ satisfying \eqref{hb} is called the \emph{Hilbert bound} of $w$, and is denoted by $\|w\|_H$. We remark that each representable functional is Hilbert bounded and positive. The partially ordered set of representable functionals will be denoted by $\rpa$. If $u$ is a Hilbert bounded positive functional which satisfies $u\leq v$ for some $v\in\rpa$, then $u$ and $v-u$ are both representable (with $\|u\|_H\leq\|v\|_H$ and $\|v-u\|_H\leq\|v\|_H$). If $\seq{w}$ is a monotone decreasing sequence in $\rpa$ (that is, if $w_j\geq w_k\geq 0$ if $j\leq k$) then the pointwise limit $w$ belongs to $\rpa$ and is equal to the greatest lower bound of $\seq{w}$ in $\rpa$.

Let us define the notions of strong absolute continuity and singularity of representable functionals. Using Gudder's terminology, we say that $w$ is \emph{strongly $v$-absolute continuous} if \begin{equation}\label{strong ac}
\Big(v(a_n^*a_n)\to0\quad\textrm{and}\quad w((a_n-a_m)^*(a_n-a_m))\to0\Big)\quad\Longrightarrow \qquad w(a_n^*a_n)\to0.
\end{equation}
We say that $w$ is \emph{$v$-singular} if $u\leq w$ and $u\leq v$ imply $u=0$ for any $u\in\rpa$. The aim of this subsection is to prove that every $w\in\rpa$ can be decomposed into strongly $v$-absolute continuous and $v$-singular parts.
Since every representable functional defines a form on $\alg$, namely
$$w\mapsto\tf_w;\qquad \tf_w(a,b):=w(b^*a),\quad(a,b\in\alg)$$
it seems to be easy to apply Theorem \ref{THM}. But we have to keep in mind that not every form on $\alg$ is induced by a representable functional. If we are able to guarantee for any pair $v,w\in\rpa$ that there exists a $u\in\rpa$ such that $\tf_u=\tf_w-\tf_w:\tf_v=\mu_{\tf_w}(\tf_v)$, then we can mimicking Arlinskii's approach. Using Tarcsay's recent result we can show that this is indeed the case. 

Consider the GNS triplets $(\hil_w,\pi_w,\zeta_w)$ and $(\hil_v,\pi_v,\zeta_v)$ associated with $w$ and $v$, respectively. Let $\pi$ stand for the direct sum of $\pi_w$ and $\pi_v$, and let $P$ be the orthogonal projection onto the ortho-complement of the following $\pi$-invariant subspace
\begin{equation*}
\set{\pi_w(a)\zeta_w\oplus\pi_v(a)\zeta_v}{a\in\alg}\subseteq\hil_w\oplus\hil_v.
\end{equation*}
Tarcsay proved in \cite[Theorem 5.1]{Tarcsay_parallel} that if we define the parallel sum $w:v$ by
\begin{equation*}
        (w:v)(a):=\sip{\pi(a)P(\zeta_w\oplus0)}{P(\zeta_w\oplus0)},\qquad (a\in\alg),
\end{equation*}
then $w:v$ is a representable functional which satisfies
\begin{equation}\label{E:f:g_q}
        (w:v)(a^*a)=\inf\set{w((a-b)^*(a-b))+v(b^*b)}{b\in\alg},\qquad (a\in\alg).
\end{equation}
We emphasize here that $\tf_{w:v}=\tf_w:\tf_v$ holds according to \eqref{E:f:g_q}. Summarizing these observations, we conclude that the function $\widetilde{\mu_v}$ defined by $\widetilde{\mu_v}(w):=w-w:v$ maps $\rpa$ into $\rpa$. Now we establish a Lebesgue-type decomposition in the $^*$-algebra context. The same result has been obtained in \cite[Theorem 3.3]{tarcsay-funkleb} with an essentially different proof.

\begin{theorem}\label{LD rep}
Let $w$ and $v$ be representable functionals on $\alg$. Then $w$ splits into strongly $v$-absolute continuous and $v$-singular parts.
\end{theorem}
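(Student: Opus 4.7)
The plan is to transport Arlinskii's iteration from Theorem \ref{THM} directly to the setting of representable functionals. The decisive enabling ingredient is the result of Tarcsay quoted above: the parallel sum $w:v$ is again representable and $\tf_{w:v}=\tf_w:\tf_v$. This makes the map $\widetilde{\mu_v}:\rpa\to\rpa$, $\widetilde{\mu_v}(w):=w-w:v$, well defined, and all of its properties can be read off from the corresponding properties of $\muw$ acting on $\fpx$ via the correspondence $w\mapsto\tf_w$.

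Following the scheme \eqref{sorozat}, I would set $\widetilde{\mu_v}^{[0]}(w):=w$ and recursively $\widetilde{\mu_v}^{[n]}(w):=\widetilde{\mu_v}\bigl(\widetilde{\mu_v}^{[n-1]}(w)\bigr)$. By Proposition \ref{parsumproperty}(a) transferred to the functional side, this sequence is monotone decreasing in $\rpa$, so its pointwise limit $\tau_v(w)$ again belongs to $\rpa$ by the fact recalled in the preliminaries of this subsection. Running the Theorem \ref{THM} argument on the associated forms yields $\tf_{\tau_v(w)}:\tf_v=\nullf$; by Tarcsay's identity the representable functional $\tau_v(w):v$ then satisfies $(\tau_v(w):v)(a^*a)=0$ for every $a\in\alg$, and Hilbert boundedness of $\tau_v(w):v$ forces it to be the zero functional. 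To deduce $v$-singularity, I would assume $u\in\rpa$ with $u\leq\tau_v(w)$ and $u\leq v$; applying Proposition \ref{parsumproperty}(a) and (d) at the form level gives $\tfrac{1}{2}\tf_u=\tf_u:\tf_u\leq\tf_{\tau_v(w):v}=\nullf$, hence $u(a^*a)=0$, and Hilbert boundedness of $u$ forces $u=0$.

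For the remaining part, set $w_{\mathrm{ac}}:=w-\tau_v(w)$; since $\tau_v(w)\leq w$ and both are representable, $w_{\mathrm{ac}}\in\rpa$ by the fact quoted at the beginning of this subsection. The telescoping identity
\begin{equation*}
w-\widetilde{\mu_v}^{[n]}(w)=\sum_{k=0}^{n-1}\widetilde{\mu_v}^{[k]}(w):v
\end{equation*}
transferred to the form level yields $\tf_{w-\widetilde{\mu_v}^{[n]}(w)}\leq n\,\tf_v$, so $\tf_{w_{\mathrm{ac}}}$ is the pointwise supremum of an increasing sequence of $\tf_v$-dominated forms, i.e.\ $\tf_v$-almost dominated. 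By \eqref{clad} it is $\tf_v$-closable, and evaluating the defining implication \eqref{t is w-closable} on sequences of the shape $a_n^*a_n$ reproduces exactly \eqref{strong ac}, giving strong $v$-absolute continuity of $w_{\mathrm{ac}}$. The main obstacle I anticipate is precisely this transit between forms and functionals, most notably the need to check that the form-level singularity $\tf_{\tau_v(w)}:\tf_v=\nullf$ is strong enough to yield the a priori more demanding functional-level $v$-singularity, which is overcome by the Hilbert-bound trick employed above; once that is in place, every step of Theorem \ref{THM} has a direct $^*$-algebraic counterpart.
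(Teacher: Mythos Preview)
Your proposal is correct and follows essentially the same approach as the paper: both apply the Arlinskii iteration of Theorem \ref{THM} at the form level and then translate the resulting $\tf_v$-almost dominated/$\tf_v$-singular splitting back to representable functionals via the correspondence $w\mapsto\tf_w$ and Tarcsay's identity $\tf_{w:v}=\tf_w:\tf_v$. Your version is in fact slightly more explicit than the paper's in one respect, namely in invoking Hilbert boundedness to pass from $u(a^*a)=0$ for all $a$ to $u=0$, which is the step needed to upgrade form-level singularity to functional-level $v$-singularity.
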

\begin{proof}
Consider the forms $\tf_w$ and $\tf_v$.  Theorem \ref{THM} says that $\tf_w$ can be written as 
$$\tf_w=\big(\tf_w-\tau_{\tf_v}(\tf_w)\big)+\tau_{\tf_v}(\tf_w),$$
where $\tf_w-\tau_{\tf_v}(\tf_w)$ is $\tf_v$-almost dominated and $\tau_{\tf_v}(\tf_w)$ is $\tf_v$ singular. According to the previous observations, both forms are induced by representable functionals, say $w_{r}$ and $w_{s}$, respectively. Comparing \eqref{t is w-closable} and \eqref{strong ac} we obtain that $w_{r}$ is strongly $v$-absolute continuous if and only if $\tf_{w_{r}}$ is $\tf_{v}$-closable, or equivalently $\tf_{w_{r}}=\big(\tf_w-\tau_{\tf_v}(\tf_w)\big)$ is $v$-almost dominated according to \eqref{clad}. Singularity of $w_{s}$ and $v$ follows from the $\tf_v$-singularity of $\tf_{w_{s}}=\tau_{\tf_v}(\tf_w)$. Indeed, assume that there exists a nonzero $u\in\rpa$ such that $u\leq w_{s}$ and $u\leq v$. In this case, $\tau_{\tf_v}(\tf_w):\tf_v\geq\tf_{u}:\tf_{u}=\frac{1}{2}\tf_u$, which contradicts \eqref{t:w=0}.
\end{proof}
\noindent\textbf{Remark.} This type of decomposition has been investigated by Kosaki \cite{kosaki} in the following important special case: $\alg$ is a von Neumann algebra, $w$ is a normal positive functional, and $v$ is a faithful normal state. (Recall that positive functionals on $C^*$-algebras are representable according to \cite[Theorem 11.3.3]{palmer}.) Kosaki also presented an example in \cite[Section 10]{kosaki} which demonstrates that the Lebesgue-type decomposition in Theorem \ref{LD rep} is not unique in general.
\subsection{Finitely additive measures}

In this section we show that how the Lebesgue-Darst decomposition of nonnegative finitely additive measures can be obtained by our approach.

We will follow the terminology of \cite{Rao}. Let $\mathcal{F}$ be a field of subsets of a set $\Omega$. A set function $\mu:\mathcal{F}\to\mathbb{R}$ is called a \emph{charge}  if $0\leq\mu(A)\leq\mu(\Omega)<+\infty$ for all $A\in\mathcal{F}$, and is additive, i.e. $\mu(A\cup B)=\mu(A)+\mu(B)$ whenever $A$ and $B$ are disjoint elements of $\mathcal{F}$. We say that $\mu$ is $\nu$-absolute continuous, if for every
$\varepsilon>0$ there exists $\delta
>0$ such that $\mu(A)<\varepsilon$, whenever $A\in\mathcal{F}$ and $\nu(A)<\delta$. The charge $\mu$ is called $\nu$-singular if for every charge $\eta$ the inequalities $\eta\leq\mu$ and $\eta\leq\nu$ imply $\eta=\theta$, where $\theta$ is the zero charge, and the symbol $\leq$ refers to the partial order
\begin{align*}
\mu\leq\nu\qquad\Longleftrightarrow\qquad\forall A\in\mathcal{F}:~~\mu(A)\leq\nu(A).
\end{align*}
A decomposition of $\mu$ into $\nu$-absolute continuous and $\nu$-singular parts is called a Lebesgue decomposition of $\mu$ with respect to $\nu$. We are going to show that such a decomposition always exists. Let us denote the linear space of complex $\mathcal{F}$-step functions with $\mathcal{S}$, and consider the form $\tf_{\mu}$ induced by the charge $\mu$
\begin{align}\label{induced form}
\tf_{\mu}(\varphi,\psi):=\int\limits_{\Omega}\varphi\cdot\overline{\psi}~\mathrm{d}\mu,\qquad(\ffi,\psi\in\mathcal{S}).
\end{align}
Here of course, the integral is the elementary integral, i.e. it is just a finite sum. Similarly as for representable functionals, not every form on $\mathcal{S}$ is induced by a charge, so we have to explain that why Arlinskii's iteration is applicable. 
If a form $\tf$ on $\mathcal{S}$ is given, the natural way to define a set function corresponding to $\tf$ is the following: 
\begin{align*}\vartheta_{\tf}(A):=\tf[\chi_A],\qquad(A\in\mathcal{F})
\end{align*}
where $\chi_A$ denotes the characteristic function of $A$. It is easy to see that if $\tf[\varphi]=\tf[|\varphi|]$ holds for all $\varphi\in\mathcal{S}$, then $\vartheta_{\tf}$ is additive. Indeed, for any disjoint sets $A,B\in\mathcal{F}$ we have
\begin{align*}
\tf[\chi_A]+\tf[\chi_B]=\frac{1}{2}(\tf[\chi_A+\chi_B]+\tf[\chi_A-\chi_B])=\frac{1}{2}(\tf[|\chi_A+\chi_B|]+\tf[|\chi_A-\chi_B|])=\tf[\chi_A+\chi_B]
\end{align*}
according to the parallelogram law. Furthermore, we see from \eqref{induced form} that the converse implication is also true, i.e. $\tf_{\mu}[\ffi]=\tf_{\mu}[|\ffi|]$ for all $\ffi\in\mathcal{S}$. Let us define the parallel sum of two charges by the identity
\begin{align}\label{ps content}
(\mu:\nu)(A):=(\tf_{\mu}:\tf_{\nu})[\chi_A],\qquad(A\in\mathcal{F}).
\end{align}
Let $\ffi=\sum\limits_{j=1}^n\lambda_k\chi_{{}_{A_j}}\in\mathcal{S}$ be fixed, where $A_i\cap A_j=\emptyset$ whenever $i\neq j$. Now define the function $m_{\ffi}:\Omega\to\mathbb{C}$ corresponding to $\ffi$ by
\begin{align}
m_{\ffi}(x):=\sum\limits_{j=1}^n\frac{|\lambda_j|}{\lambda_j}\chi_{{}_{A_j}}(x)+\chi_{{}_{X\setminus \bigcup_{j=1}^nA_j}}(x),\quad (x\in\Omega).
\end{align}
Since $|m_{\ffi}(x)|=1$ for all $x\in\Omega$, the multiplication by $m_{\ffi}$ is a bijection on $\mathcal{S}$. Furthermore, since both $\tf_{\mu}$ and $\tf_{\nu}$ are derived from charges, we obtain by elementary calculation that
\begin{align*}
\tf_{\nu}[\psi]=\tf_{\nu}[m_{\ffi}\psi]\quad\mbox{and}\quad\tf_{\mu}[\ffi+\psi]=\tf_{\alpha}[|\ffi|+m_{\ffi}\psi]
\end{align*}
hold for all $\psi\in\mathcal{S}$, and hence $(\tf_{\mu}:\tf_{\nu})[\ffi]=(\tf_{\mu}:\tf_{\nu})[|\ffi|].$ Since $\tf_{\mu}:\tf_{\nu}\leq\tf_{\nu}$, the set function $\mu:\nu$ defined in \eqref{ps content} satisfies $\mu:\nu\leq\nu$, and is a charge, indeed. This guarantees that there exists a charge $\vartheta$ such that $\tf_{\vartheta}=\tf_{\mu}-\tf_{\mu}:\tf_{\nu}$, namely $\vartheta=\mu-\mu:\nu$, and hence we can apply Arlinskii's iteration.
\begin{theorem}\label{LebesgueDarst}
Let $\mu$ and $\nu$ be charges on $\mathcal{F}$. Then $\mu$ splits into strongly $\nu$-absolute continuous and $\nu$-singular parts.
\end{theorem}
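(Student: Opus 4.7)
The plan is to apply Theorem \ref{THM} to the forms $\tf_{\mu}$ and $\tf_{\nu}$ on $\mathcal{S}$ and then translate the resulting decomposition of forms back to the level of charges. The discussion immediately preceding the theorem already guarantees that the Arlinskii iteration stays within forms induced by charges: starting from $\mu_0:=\mu$, the set function $\mu_{n+1}:=\mu_n-\mu_n:\nu$ is again a charge, and $\tf_{\mu_{n+1}}=\mu_{\tf_{\nu}}(\tf_{\mu_n})$. By induction each iterate $\mu_{\tf_{\nu}}^{[n]}(\tf_{\mu})$ is induced by a charge $\mu_n$, and the sequence $(\mu_n)$ is pointwise monotone decreasing.

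Next I would define $\mu_s(A):=\lim_n\mu_n(A)$ for $A\in\mathcal{F}$. As a pointwise limit of uniformly bounded, nonnegative, additive set functions, $\mu_s$ is itself a charge; expanding a step function $\varphi=\sum_i\lambda_i\chi_{A_i}$ with pairwise disjoint supports and using monotone convergence shows that $\tf_{\mu_s}[\varphi]=\tau_{\tf_{\nu}}(\tf_{\mu})[\varphi]$ for every $\varphi\in\mathcal{S}$. Setting $\mu_r:=\mu-\mu_s$, Theorem \ref{THM} yields that $\tf_{\mu_r}=\tf_{\mu}-\tau_{\tf_{\nu}}(\tf_{\mu})$ is $\tf_{\nu}$-almost dominated while $\tf_{\mu_s}$ is $\tf_{\nu}$-singular.

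It remains to translate these two form-level properties into statements about the charges $\mu_r$ and $\mu_s$. For $\nu$-singularity of $\mu_s$: any charge $\eta$ with $\eta\leq\mu_s$ and $\eta\leq\nu$ induces a form $\tf_{\eta}$ with $\tf_{\eta}\leq\tf_{\mu_s}$ and $\tf_{\eta}\leq\tf_{\nu}$, so the $\tf_{\nu}$-singularity of $\tf_{\mu_s}$ (argued exactly as in the proof of Theorem \ref{LD rep} via \eqref{t:w=0}) forces $\tf_{\eta}=\nullf$ and hence $\eta=\theta$. For strong $\nu$-absolute continuity of $\mu_r$: by \eqref{clad} the form $\tf_{\mu_r}$ is $\tf_{\nu}$-closable, and specialising the implication \eqref{t is w-closable} to sequences of characteristic functions, together with the identities $\tf_{\mu}[\chi_A]=\mu(A)$ and $\tf_{\mu}[\chi_{A_n}-\chi_{A_m}]=\mu(A_n\triangle A_m)$, gives exactly the Darst-type closability condition of $\mu_r$ with respect to $\nu$.

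The step I expect to be the main obstacle is this last translation. The notion of \emph{strong} $\nu$-absolute continuity for charges must be chosen to be precisely the restriction of $\tf_{\nu}$-closability to sequences of characteristic functions, in direct analogy with \eqref{strong ac} for representable functionals; once that definition is adopted, the closability of $\tf_{\mu_r}$ immediately implies it. A minor technical point along the way is confirming that $\tau_{\tf_{\nu}}(\tf_{\mu})$ really is the form induced by $\mu_s$ on all of $\mathcal{S}$ and not merely on characteristic functions, but this follows from the elementary-integral structure of the forms $\tf_{\mu_n}$ and dominated/monotone convergence applied termwise in the decomposition of a step function.
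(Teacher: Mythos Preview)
Your reduction to Theorem \ref{THM} and the handling of the singular part are fine and match the paper. The place where your plan diverges from the paper, and where there is a real gap, is the ``absolute continuity'' half.

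You propose to \emph{define} strong $\nu$-absolute continuity of a charge as the restriction of $\tf_{\nu}$-closability to sequences of characteristic functions, and then read off this property from the $\tf_{\nu}$-closability of $\tf_{\mu_r}$. But that is not the notion the paper is working with. In this section the relevant concept is the classical $\varepsilon$--$\delta$ condition: for every $\varepsilon>0$ there is $\delta>0$ with $\mu_r(A)<\varepsilon$ whenever $\nu(A)<\delta$. The word ``strongly'' in the theorem statement is not an invitation to introduce a new closability-type definition; the Remark after the theorem explicitly identifies the decomposition with the Lebesgue--Darst one, which uses the $\varepsilon$--$\delta$ notion. Your closability condition on characteristic functions does not obviously imply the $\varepsilon$--$\delta$ condition (for finitely additive set functions there is no monotone-limit argument available), so as written your argument does not establish what is claimed.

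The paper closes this gap by a different, direct route: from the telescoping bound \eqref{teleszkop} one gets, at the charge level, an increasing sequence $(\mu_n)$ with $\mu_n\leq n\nu$ and $\mu_n\uparrow\mu_r$ setwise. One then argues by contradiction: if $\mu_r$ were not $\varepsilon$--$\delta$ absolutely continuous, choose $\varepsilon>0$ and sets $A_k$ with $\nu(A_k)\to 0$ but $\mu_r(A_k)>\varepsilon$; fix $j$ with $\mu_j(X)>\mu_r(X)-\varepsilon/2$, and use additivity together with $\mu_j\leq j\nu$ and $\mu_j\leq\mu_r$ to force $\mu_r(A_k)<\varepsilon$ for large $k$. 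This is the missing idea in your plan; once you insert it (and drop the ad hoc redefinition of absolute continuity), the rest of your outline is correct and essentially the paper's argument.
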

\begin{proof}
Take the forms $\tf_{\mu}$ and $\tf_{\nu}$, and apply Theorem \ref{THM}. Since the setwise limit of additive set functions is additive, we obtain a decomposition $\mu=\mu_{reg}+\mu_{sing}$, where $\tf_{\mu_{reg}}=\tau_{\tf_{\nu}}(\tf_{\mu})$. The $\nu$-singularity of $\mu_{sing}$ follows from $\tf_{\nu}$-singularity of $\tf-\tau_{\tf_{\nu}}(\tf_{\mu})$. To prove the $\nu$-absolute continuity of $\mu_{reg}$ observe from \eqref{teleszkop} 
that $\mu_{reg}$ is a setwise limit of a monotone increasing sequence $\seq{\mu}$ with $\mu_n\leq n\nu$. Assume indirectly that $\mu_{reg}$ is not $\nu$-absolute continuous. Then there exists an
$\varepsilon>0$ and a sequence $(A_n)_{n\in\mathbb{N}}$ of measurable sets such that $\nu(A_n)\rightarrow0$ as $n\to\infty$ and $\mu_{reg}(A_n)>\varepsilon$ for every
$\nen$. We can fix a $j\in\mathbb{N}$ which satisfies the first inequality in
\begin{align*}
\mu_{reg}(X)-\frac{\varepsilon}{2}<\mu_j(X)=\mu_j(X\setminus A_k)+\mu_j(A_k)\leq\mu_{reg}(X\setminus A_k)+j\nu(A_k),\qquad(k\in\mathbb{N}).
\end{align*}
But this implies $\mu(A_k)\leq j\nu(A_k)+\frac{\varepsilon}{2}<\varepsilon$ if $k$ is big enough, which is a contradiction.
\end{proof}
\noindent\textbf{Remark.} This type of decomposition is unique in contrast with the one presented in Theorem \ref{THM}. Since the absolute continuity and singularity concepts used here are equivalent to those used in \cite{Darst}, the decomposition in Theorem \ref{LebesgueDarst} is just the Lebesgue-Darst decomposition in the nonnegative case. For a completely different Hilbert space theoretic approach (which uses the same charge-form correspondence), we refer the reader to \cite[Section 3.]{stt1}.

\subsection{Positive definite operator functions} Szyma\'nski in \cite{s1} developed a general dilatation theory by means of nonnegative sesquilinear forms. He pointed out that closability in the operator function set-up has a spacial meaning. In order to formulate his result we need first some technical details. For the sake of simplicity, we choose the setting of \cite[Section 8]{lebdec} instead of \cite[Section 2]{s1}.
Let $\Ee$ be a complex Banach space, and denote by $\mathbf{B}(\Ee,\Ees)$ the set of bounded linear operators from $\Ee$ to $\Ees$. A duality between $\Ee$ and $\Ee^{\ast}$ is a mapping $\langle\cdot,\cdot\rangle:\Ee\times\Ees\to\mathbb{C}$ which is linear in its first, conjugate linear in its second variable.
Let $S$ be a non-empty set, and let $\Xx$ be the complex linear space of all functions on $S$ with values in $\Ee$ with finite support. We say that the function $\Kk:S\times S\to \mathbf{B}(\Ee,\Ees)$ is a \emph{positive definite operator function}, or shortly a \emph{kernel} on $S$ if the following equality defines a form on $\Xx$
\begin{align}\label{kernelform}
\wf_{\Kk}(f,g):=\sum\limits_{s,t\in S}\big\langle f(t),\Kk(s,t)g(s)\big\rangle,\qquad(f,g\in\Xx).
\end{align}
For the kernels $\Kk$ and $\Ll$ we write $\Kk\prec\Ll$ if $\wf_\Kk\leq\wf_\Ll$. We say that $\Kk$ is $\Ll$-closable if $\wf_{\Kk}$ is $\wf_{\Ll}$-closable. Similarly, $\Kk$ is $\Ll$-singular if $\wf_{\Kk}$ is $\wf_{\Ll}$-singular. A decomposition into $\Ll$-closable and $\Ll$-singular parts is called $\Ll$-Lebesgue decomposition.
As in the previous cases, we want to apply Arlinskii's iteraton. To do so, we have to know that there is a kernel $\Jj$ such that $\wf_{\Jj}=\wf_{\Kk}:\wf_{\Ll}$. This is indeed the case, because Hassi, Sebesty\'en, and de Snoo proved in \cite[Lemma 7.1]{lebdec} that if $\Kk$ is a kernel and $\wf$ is a form satisfying $\wf\leq\wf_{\Kk}$, then there exists a unique kernel $\Ll$ such that $\Ll\prec\Kk$ and $\wf=\wf_{\Ll}$ in the sense of \eqref{kernelform}. The kernel $\Jj$ is called the parallel sum of $\Kk$ and $\Ll$, and is denoted by $\Kk:\Ll$.

\begin{theorem}\label{Kernel Lebesgue}
Let $\Kk$ and $\Ll$ be kernels. Then $\Kk$ splits into $\wf$-closable and singular parts.\end{theorem}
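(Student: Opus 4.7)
The plan is to reduce the statement to Theorem~\ref{THM} by means of the bijective correspondence between kernels $\Jj \prec \Kk$ and forms $\wf \leq \wf_{\Kk}$ established in \cite[Lemma 7.1]{lebdec}. First I would apply Theorem~\ref{THM} to the pair of forms $\wf_{\Kk}$ and $\wf_{\Ll}$ on $\Xx$, which yields the splitting
\[
\wf_{\Kk} = \bigl(\wf_{\Kk} - \tau_{\wf_{\Ll}}(\wf_{\Kk})\bigr) + \tau_{\wf_{\Ll}}(\wf_{\Kk}),
\]
whose first summand is $\wf_{\Ll}$-almost dominated, and therefore $\wf_{\Ll}$-closable by \eqref{clad}, and whose second summand is $\wf_{\Ll}$-singular.

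Next I would observe that both summands are majorised by $\wf_{\Kk}$. The Arlinskii iterates $\mu_{\wf_{\Ll}}^{[n]}(\wf_{\Kk})$ form a monotone decreasing sequence starting at $\wf_{\Kk}$, hence the pointwise limit $\tau_{\wf_{\Ll}}(\wf_{\Kk})$ satisfies $\tau_{\wf_{\Ll}}(\wf_{\Kk})\leq\wf_{\Kk}$, and consequently $\wf_{\Kk}-\tau_{\wf_{\Ll}}(\wf_{\Kk})$ is a nonnegative form dominated by $\wf_{\Kk}$ as well. A double application of \cite[Lemma 7.1]{lebdec} then produces unique kernels $\Kk_c,\Kk_s\prec\Kk$ with
\[
\wf_{\Kk_c}=\wf_{\Kk}-\tau_{\wf_{\Ll}}(\wf_{\Kk}),\qquad \wf_{\Kk_s}=\tau_{\wf_{\Ll}}(\wf_{\Kk}).
\]
Adding these two equalities gives $\wf_{\Kk_c+\Kk_s}=\wf_{\Kk}$, and the uniqueness clause of the same lemma forces $\Kk=\Kk_c+\Kk_s$.

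The required properties of the decomposition are now immediate from the definitions collected before the theorem: $\Kk_c$ is $\Ll$-closable because $\wf_{\Kk_c}$ is $\wf_{\Ll}$-closable, and $\Kk_s$ is $\Ll$-singular because $\wf_{\Kk_s}$ is $\wf_{\Ll}$-singular. The main obstacle is really just bookkeeping: since the paragraph preceding the theorem already guarantees that parallel sums of kernels exist and that the correspondence $\Jj\mapsto\wf_{\Jj}$ is compatible with parallel addition, and since every form appearing in the Arlinskii orbit of $(\wf_{\Kk},\wf_{\Ll})$ lies below $\wf_{\Kk}$ and so corresponds via Lemma~7.1 to a unique sub-kernel of $\Kk$, the argument is essentially a translation back and forth between kernels and their induced forms.
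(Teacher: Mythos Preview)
Your proposal is correct and follows essentially the same route as the paper: apply Theorem~\ref{THM} to the induced forms $\wf_{\Kk}$ and $\wf_{\Ll}$, then invoke \cite[Lemma 7.1]{lebdec} to pull the resulting form decomposition back to kernels. The paper's own proof is slightly terser (it defines $\Kk_s:=\Kk-\Kk_r$ directly rather than obtaining $\Kk_s$ from a second application of Lemma~7.1 and then invoking uniqueness), but the underlying argument is the same.
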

\begin{proof}
Take the forms $\wf_{\Kk}$ and $\wf_{\Ll}$ and apply Theorem \ref{THM}. According to the previous observations, we have a kernel $\Kk_{r}$ such that $\wf_{\Kk_{r}}=\wf_{\Kk}-\tau_{\wf_{\Ll}}(\wf_{\Kk})$. Since $\wf_{\Ll}-\tau_{\wf_{\Ll}}(\wf_{\Kk})$ is $\wf_{\Ll}$-almost dominated, or equivalently, $\wf_{\Ll}$-closable, we obtain that $\Kk_{r}$ is $\Ll$-closable. The $\Ll$-singularity of $\Kk_{s}:=\Kk-\Kk_{r}$ follows form the singularity of their forms.
\end{proof}
\noindent\textbf{Remark.} As was mentioned at the beginning of this subsection, closability has a special meaning in this set-up. Namely, Szyma\'nski proved in \cite[Theorem (3.5)]{s1} that $\Kk$ is $\Ll$-closable if and only if $\Kk$ has a closed dilation (i.e. a closed operator) acting on an auxiliary Hilbert space associated to $\Ll$. Consequently, Theorem \ref{Kernel Lebesgue} can be viewed as a decomposition into dilatable and singular parts.

\end{document}